\newcommand{\re}{\text{\rm Re\,}}
\newcommand{\bd}{{\mathbb{D}}}
\newcommand{\bn}{{\mathbb{N}}}
\newcommand{\bp}{{\mathbb{P}}}
\newcommand{\bz}{{\mathbb{Z}}}
\newcommand{\bc}{{\mathbb{C}}}
\newcommand{\bt}{{\mathbb{T}}}
\newcommand{\cb}{\mathcal{B}}
\renewcommand{\a}{\alpha}
\renewcommand{\l}{\lambda}
\newcommand{\s}{\sigma}
\newcommand{\ep}{\varepsilon}
\newcommand{\p}{\varphi}
\renewcommand{\d}{\delta}
\newcommand{\oo}{\Omega}
\renewcommand{\gg}{\Gamma}
\newcommand{\ovl}{\overline}
\newcommand{\pt}{\partial}
\newcommand{\lp}{\left(}
\newcommand{\rp}{\right)}
\newcommand{\wih}{\widehat}
\newcommand{\wit}{\widecheck}
\DeclareMathOperator{\di}{\mathrm{dist}}
\numberwithin{equation}{section}
\newtheorem{theorem}{Theorem}[section]
\theoremstyle{definition}
\newtheorem{definition}[theorem]{Definition}
\newtheorem{remark}[theorem]{Remark}
\begin{document}

\title[On the resolvent growth of a Toeplitz operator]
{On local non-tangential growth of the resolvent of a banded Toeplitz operator}

\author{L. Golinskii  }
\address{Mathematics Division, Institute for Low Temperature Physics and
Engineering, 47 Science’s ave., Kharkiv 61103, Ukraine}
\email{golinskii@ilt.kharkov.ua, leonid.golinskii@gmail.com}

\author{S. Kupin}
\address{Institut de Mathématiques de Bordeaux UMR5251, CNRS, Université de Bordeaux, 351 ave. de la Libération, 33405 Talence Cedex, France}
\email{skupin@math.u-bordeaux.fr}

\vspace{1cm}
\begin{abstract}

We study the growth of the resolvent of a Hardy--Toeplitz operator $T_b$ with a Laurent polynomial symbol (\emph{i.e., } 
the matrix $T_b$ is banded), at the neighborhood of a point $w_0\in\pt(\s(T_b))$ on the boundary of its spectrum. We show that 
such growth is inverse linear in some non-tangential domains at the vertex $w_0$, provided that $w_0$ does not belong to a 
certain finite set on the complex plane. 
\end{abstract}

\keywords{Toeplitz operator, Hardy space, resolvent growth, Laurent polynomials, non-tangential domains.}
\subjclass[2010]{Primary: 47B35; Secondary: 30H10, 47G10. }

\maketitle

\section*{Introduction}\label{s0}

In paper \cite{gokuvi},  the authors study the growth of resolvent within certain classes of Toeplitz operators $T_b$. 
Recall that, given a function $b\in L^\infty(\bt)$, the functional realization
of the Toeplitz operator $T_b$ on $H^2(\bt)$ is
$$  (T_b f)(t)=P_+\bigl(b(t)f(t)\bigr), \qquad f\in H^2(\bt), 
$$
where $P_+$ is the orthogonal projection from $L^2(\bt)$ onto $H^2(\bt)$, $b$ is a symbol of $T_b$. The matrix realization of the operator is
$$
T_b=\|b_{i-j}\|_{i,j\ge1}: \ell^2(\bn)\to \ell^2(\bn),
$$ 
or, in words, this is a semi-infinite matrix with constant diagonals. Here $b_n$ are the Fourier coefficients of the symbol $b$, \emph{i.e.,}
$$
b(t)=\sum_{n\in\bz} b_n\, t^n, 
$$
where $t\in \bt$.

The problem under consideration is as follows. Given a class $\mathcal{B}=\{b\}$ of symbols, the goal is to find upper bounds of the form
$$ \|(T_b-w)^{-1}\|\le C_b\,\p(\di(w,\s(T_b))), $$
where the function $\p$ depends on $\cb$ (but not on an individual symbol), and the bound holds {\it for all} $w\in\rho(T_b)$, the resolvent set of $T_b$.
A positive constant $C_b$ may depend on $b$.
The main results of \cite{gokuvi} concern such bounds for certain classes $\cb$ and
$$ \p_1(x)=\frac1x, \qquad \p_2(x)=\frac1x\Bigl(1+\frac1x\Bigr). $$
The bound with $\p=\p_1$ is referred to as a Linear Rezolvent Growth (LRG), and one with $\p=\p_2$ as a Quadratic Rezolvent Growth (QRG).
Primarily, the authors deal with banded Toeplitz operators $T_b$ with the Laurent polynomial symbols
\begin{equation}\label{laur}
b(z):=\frac{b_{-m}}{z^m}+\ldots+b_0+\ldots+b_k z^k, \quad m,k\in\bn, \quad b_{-m}b_k\not=0.
\end{equation}

In this note,  we are interested in the local versions of the above result, \emph{i.e.,}  in the upper bounds for 
$\bigl\|(T_b-w)^{-1}\bigr\|$ in a {\it neighborhood of a point} 
$w_0=b(t_0)$, $t_0\in\bt$, 
$$ \oo=\oo(b,w_0,\ep):=B(w_0,\ep)\cap\rho(T_b), \qquad B(w_0,\ep)=\{w:|w-w_0|<\ep\}, $$
for small $\ep>0$.

It is well known \cite[Problem 78]{ha}, that for any bounded linear operator $T$ on a Hilbert space, the boundary of its spectrum is a part of the
essential spectrum
$$ \pt\s(T)\subset \s_{ess}(T). $$
For Toeplitz operators the inclusion looks as $\pt\s(T_b)\subset b(\bt)$. Recall that $\s_{ess}(T_b)=b(\bt)$ is the image of $\bt$ under mapping $b$,
and the above inclusion can be proper. Such examples can be found, \emph{e.g., } in \cite[Figure 7.1, 
p. 166]{bogr}. 
In these cases some parts of $b(\bt)$ lie inside the spectrum of $T_b$, so the local problem makes no sense for such points of $b(\bt)$. 
In what follows, we actually always assume that $w_0\in\pt\s(T_b)$, and so 
$\oo\not=\emptyset$ for each $\ep>0$. 

\begin{definition}
We say that $T_b$ admits the local linear resolvent growth at the point $w_0\in\pt\s(T_b)$, $T_b\in LLRG(w_0)$, if for all small enough
$\ep>0$
\begin{equation}\label{llrg}
\bigl\|(T_b-w)^{-1}\bigr\|\le\frac{C_b}{\di(w,\s(T_b))}\,, \qquad w\in\oo=B(w_0,\ep)\cap\rho(T_b).
\end{equation}
\end{definition}

In general, the local problems (say, in approximation theory) assume certain ``local'' (in a sense) conditions on the symbol, that is, the conditions based on
the behavior of $b$ near the point $t_0$. We partially follow this convention. For instance, in the first result, Theorem \ref{th1}, conditions are
given in terms of the zeros of the polynomial $z^m(b(z)-w_0)$. In contrast, LRG for regular Laurent polynomial symbols, \cite[Theorem 2.1]{gokuvi},
is based on certain properties of zeros of the whole family of polynomials $z^m(b(z)-w)$, $w\in\rho(T_b)$.
In the second result, Theorem \ref{weakllrg}, we obtain bound  \eqref{llrg}, but in a smaller neighborhood $\oo'\subset\oo$, and for all $w_0\in \pt\s(T_b)$ except for
a finite set of points on the boundary of the spectrum. The result is illustrated  by the example of the flower with three petals 
in the final section of the paper, see Figure \ref{fig1}, where LLRG is studied at the neighborhood of the origin (the intersection point for $b(\bt)$).

\section{Local regularity and LLRG}\label{s1}

We follow notation of \cite{gokuvi} for the algebraic equation
$$ P(z,w):=z^m(b(z)-w)=0, \qquad P(z,w)=b_k\,\prod_{j=1}^{m+k} (z-z_j(w)), $$
its divisors $Z(w)=\{z_j(w)\}_{j=1}^{m+k}$, and their partitions
\begin{equation*}
\begin{split} 
Z(w_0) &=Z_{in}(w_0)\cup Z_{un}(w_0)\cup Z_{ext}(w_0), \quad Z(w)=Z_{in}(w)\cup Z_{ext}(w), \quad w\notin b(\bt), \\
Z_{in}(v) &= Z(v)\cap\bd, \quad Z_{un}(v)=Z(v)\cap\bt, \quad Z_{ext}(v)=Z(v)\cap\bd_-,
\end{split}
\end{equation*}
for the unperturbed and perturbed divisors, respectively. Note that $Z_{un}(w_0)\not=~\emptyset$. 

The Wiener--Hopf factorization now looks as
\begin{equation}\label{wiho}
\begin{split}
b(z)-w &=b_k\,\prod_{z_i\in Z_{ext}}(z-z_i(w))\cdot \prod_{z_j\in Z_{in}}\lp 1-\frac{z_j(w)}{z}\rp\, \\
&=b_{ext}(z,w)b_{in}(z,w),
\end{split}
\end{equation}
where
\begin{equation}\label{inext}
b_{ext}(z,w)=b_k\,\prod_{z_i\in Z_{ext}}(z-z_i(w)), \quad b_{in}(z,w)=\prod_{z_j\in Z_{in}}\lp 1-\frac{z_j(w)}{z}\rp.
\end{equation}

The concept of regularity for Laurent polynomials, introduced in \cite[Definition 1.5]{gokuvi}, has its obvious local analogue: 
a symbol $b$ is locally regular at the point $w_0\in\pt\s(T_b)$ if at least one of $Z_{in}(w)$, $Z_{ext}(w)$ is separated from the unit circle 
$\bt$ as long as $w\in\oo$ (but not necessarily for all $w\in\rho(T_b)$).

As we will see shortly, the local regularity at $w_0$ implies LLRG($w_0$), in exactly the same way as the regularity implies LRG \cite[Theorem 2.1]{gokuvi}.
As usual, $|A|$ is the number of points in $A$, counting multiplicity.

\begin{theorem}\label{th1}
Given a Laurent polynomial $b$, and $w_0=b(t_0)\in\pt\s(T_b)$, let at least one of the following conditions holds
\begin{itemize}
\item[(i)]  $|Z_{un}(w_0)|=1$,
\item[(ii)] $|Z_{in}(w_0)|=m$,
\item[(iii)] $|Z_{ext}(w_0)|=k$.
\end{itemize}
Then $b$ is locally regular at $w_0$, and the LLRG($w_0$) holds. In particular, the latter is true whenever $w_0$ is neither a self-intersection point, 
nor a cusp $(b'(t_0)\not=0)$.
\end{theorem}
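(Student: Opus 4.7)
My plan has two stages: first, to extract local regularity at $w_0$ from each hypothesis by a continuity-of-roots argument; second, to transcribe the LRG argument of \cite[Theorem 2.1]{gokuvi} to the local setting in order to obtain LLRG.

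For the regularity stage I would use two ingredients. The roots $z_j(w)$ of $P(z,w)$ depend continuously on $w$, and for $w\in\rho(T_b)$ the vanishing Fredholm index (equivalently, the argument principle applied to $z^m(b(z)-w)$, which has a pole of order $m$ at the origin) forces $|Z_{in}(w)|=m$ and $|Z_{ext}(w)|=k$. Under hypothesis (ii), the $m$ points of $Z_{in}(w_0)$ sit at positive distance $\d$ from $\bt$, so by continuity their continuations stay in $\bd$ at distance at least $\d/2$ from $\bt$ as long as $w$ is close to $w_0$; a counting argument using $|Z_{in}(w)|=m$ then shows these continuations exhaust $Z_{in}(w)$ (and in particular the $|Z_{un}(w_0)|$ boundary roots are all forced into $\bd_-$), so $Z_{in}(w)$ is uniformly separated from $\bt$. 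Hypothesis (iii) is symmetric, yielding uniform separation of $Z_{ext}(w)$.

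Case (i) needs slightly more care because $Z_{un}(w_0)\neq\emptyset$. Let $\z_0$ denote the unique boundary root and set $p=|Z_{in}(w_0)|$, $q=|Z_{ext}(w_0)|=m+k-1-p$. For $w\in\oo$ close to $w_0$, the $p+q$ roots near $Z_{in}(w_0)\cup Z_{ext}(w_0)$ stay off $\bt$, while the remaining root $\z(w)$ perturbs $\z_0$ into either $\bd$ or $\bd_-$. The constraint $|Z_{in}(w)|=m$ then forces, on each connected component of $\oo$, either $p=m-1$ with $\z(w)\in\bd$ (so $Z_{ext}(w)$, consisting of the $k$ roots near $Z_{ext}(w_0)$, is uniformly separated from $\bt$) or $p=m$ with $\z(w)\in\bd_-$ (so $Z_{in}(w)$ is separated). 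Either subcase produces local regularity.

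With local regularity in hand, I would transcribe the factorization argument from \cite[Theorem 2.1]{gokuvi}. Suppose WLOG that $Z_{in}(w)$ is uniformly separated from $\bt$ on $\oo$, so that $b_{in}(\cdot,w)$ and its reciprocal are uniformly bounded on $\bt$. The Wiener--Hopf factorization \eqref{wiho}, together with the identity $T_{b_{in}b_{ext}}=T_{b_{in}}T_{b_{ext}}$ (valid because $b_{ext}$ is analytic), gives $(T_b-wI)^{-1}=T_{b_{ext}(\cdot,w)}^{-1}T_{b_{in}(\cdot,w)}^{-1}$ with $\|T_{b_{in}(\cdot,w)}^{-1}\|\le C_b$, reducing LLRG to the inequality $\|T_{b_{ext}(\cdot,w)}^{-1}\|\le C_b'/\di(w,\s(T_b))$. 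I expect this analytic-symbol resolvent estimate to be the main obstacle, since $\s(T_{b_{ext}(\cdot,w)})$ is generally different from $\s(T_b)$; I would import it directly from the proof of \cite[Theorem 2.1]{gokuvi}. Finally, the ``in particular'' clause is immediate from (i): if $w_0$ is neither a self-intersection of $b(\bt)$ nor a cusp, then $t_0$ is the unique preimage of $w_0$ on $\bt$ and a simple zero of $b(z)-w_0$, hence $|Z_{un}(w_0)|=1$.
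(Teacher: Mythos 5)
Your proposal follows essentially the paper's route: the regularity stage (continuity of the roots combined with the index count $|Z_{in}(w)|=m$, $|Z_{ext}(w)|=k$ on $\rho(T_b)$, which is \cite[Lemma 1.4]{gokuvi}) is the same case analysis for (i)--(iii) that the paper gives, and the ``in particular'' clause is handled identically via (i). The only substantive difference is in the second stage, and there your write-up both defers and slightly misjudges the key step. The paper does not factor the operator as $T_{b_{ext}}^{-1}T_{b_{in}}^{-1}$; it invokes Krein's inequality \cite[Theorem 1.15]{bosil}, $\|(T_b-w)^{-1}\|\le\|b_{in}^{-1}\|_\infty\,\|b_{ext}^{-1}\|_\infty$, and then bounds the two sup-norms pointwise on $\bt$. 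Seen this way, the estimate you flag as ``the main obstacle'' and propose to import from \cite[Theorem 2.1]{gokuvi} is a two-line computation requiring no spectral analysis of $T_{b_{ext}(\cdot,w)}$: since $b_{ext}(\cdot,w)$ is a polynomial with no zeros in $\ovl\bd$, one has $T_{b_{ext}}^{-1}=T_{1/b_{ext}}$ and $\|T_{b_{ext}}^{-1}\|=\|b_{ext}^{-1}\|_\infty$, while on $\bt$
\[
|b_{ext}(t,w)|=\frac{|b(t)-w|}{|b_{in}(t,w)|}\ge\frac{|b(t)-w|}{2^m}\ge\frac{\di(w,b(\bt))}{2^m}\ge\frac{\di(w,\s(T_b))}{2^m},
\]
because $|b_{in}(t,w)|\le\prod_{z_j\in Z_{in}}(1+|z_j(w)|)\le 2^m$ and $b(\bt)\subset\s(T_b)$. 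Also, your ``WLOG'' is not literal: under (iii), and in one of the subcases of (i), it is $Z_{ext}(w)$ that is separated from $\bt$ while $Z_{in}(w)$ may approach it, so the mirror estimates are genuinely needed --- there $\|b_{ext}^{-1}\|_\infty\le C$ follows from $|b_{ext}(t,w)|\ge|b_k|\prod_{z_i\in Z_{ext}}(|z_i(w)|-1)$, and $\|b_{in}^{-1}\|_\infty\le C'/\di(w,\s(T_b))$ follows from $|b_{in}(t,w)|=|b(t)-w|/|b_{ext}(t,w)|$ together with the uniform boundedness of the roots on $\oo$; the paper writes out both cases explicitly. With these two points supplied, your argument is complete and coincides in substance with the paper's proof.
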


\begin{proof}
The idea is to watch over $Z_{un}(w_0)$ for small perturbations of $w_0$ within $\rho(T_b)$, taking into account continuity of divisors. Under
assumption (i), $Z_{un}(w_0)=\{z_n(w_0)\}$ is a single point, and it moves inside (outside) the unit disk $\bd$, so $Z_{ext}(w)$ ($Z_{in}(w)$) is
separated from $\bt$, and the local regularity follows.

Under assumption (ii), all $Z_{un}(w_0)$ moves outside $\bd$, since, by \cite[Lemma 1.4]{gokuvi}, $|Z_{in}(w)|=m$ for all $w\in\rho(T_b)$. Similarly, 
under assumption (iii), all $Z_{un}(w_0)$ moves inside $\bd$, since, by the same result, $|Z_{ext}(w)|=k$ for all $w\in\rho(T_b)$. 
So, under both (ii) and (iii), the local regularity holds.

To show that the local regularity at $w_0$ implies LLRG($w_0$), we proceed as in the proof of \cite[Theorem 2.1]{gokuvi}. 
The argument is based on the result of M.~Krein, see \cite[Theorem 1.15]{bosil}
\begin{equation*}
\bigl\|(T_b-w)^{-1}\bigr\|\le \bigl\|b_{in}^{-1}\bigr\|_\infty\,\cdot \bigl\|b_{ext}^{-1}\bigr\|_\infty,
\end{equation*}
$b_{in}$ and $b_{ext}$ are from \eqref{wiho}.

Assume first that $Z_{in}(w)$ is separated from $\bt$, \emph{i.e., }
$$ 1-|z_j(w)|>C_1>0, \qquad z_j\in Z_{in}(w), \quad w\in\oo. $$
In what follows, $C_j$ stand for some positive constants, which depend on $b$, $w_0$, $\ep$. Then
\begin{equation*}
\begin{split}
|b_{in}(t,w)| &=\prod_{z_j\in Z_{in}}|t-z_j(w)|\ge\prod_{z_j\in Z_{in}}(1-|z_j(w)|)\ge C_2, \\
\bigl\|b_{in}^{-1}\bigr\|_\infty &\le C_2^{-1}, \qquad w\in\oo.
\end{split}
\end{equation*}

The lower bound for $b_{ext}$ is simple
\begin{equation*}
\begin{split}
|b_{ext}(t,w)| &=\frac{|b(t)-w|}{\prod_{z_j\in Z_{in}}|t-z_j(w)|}\ge\frac{|b(t)-w|}{\prod_{z_j\in Z_{in}}(1+|z_j(w)|)} \\
&\ge\frac{|b(t)-w|}{2^m}\,,
\end{split}
\end{equation*}
and so
\begin{equation*} 
\bigl\|b_{ext}^{-1}\bigr\|_\infty\le \frac{C_3}{\|b(t)-w\|_\infty}\le \frac{C_3}{\di(w,\s(T_b))}\,, \quad w\in\oo, 
\end{equation*}
which implies LLRG($w_0$).

Secondly, assume that $Z_{ext}(w)$ is separated from $\bt$, \emph{i.e., } for $z_i\in Z_{ext}(w)$
$$ |z_i(w)|-1\ge C_3>0, \qquad w\in\oo. $$
Then
$$ |b_{ext}(t,w)|=|b_k|\,\prod_{z_i\in Z_{ext}}|t-z_i(w)|\ge |b_k|\,\prod_{z_i\in Z_{ext}}\bigl(|z_i(w)|-1\bigr)\ge C_4>0, \quad w\in\oo. $$
To obtain the lower bound for $b_{in}$, note that for $w\in\oo$, all the roots $z_n(w)$ are uniformly bounded, $\max_n|z_n(w)|\le C_5$, and so
$$ |b_{in}(t,w)|=\frac{|b(t)-w|}{|b_{ext}(t,w)|}\ge C_6|b(t)-w|, \qquad \|b_{in}^{-1}\|_\infty\le \frac{C_7}{\di(w,b(\bt))}\,, $$
and LLRG($w_0$) follows once again.  

The last statement is equivalent to (i). The proof is complete.
\end{proof}

\begin{remark}
The condition (i) means that the polynomial
$$ Q(z,w_0):=\frac{P(z,w_0)-P(t_0,w_0)}{z-t_0}=\frac{P(z,w_0)}{z-t_0}\not=0, \qquad z=t\in\bt. $$
If
$$ P(z,w_0)=\sum_{j=0}^{m+k}p_j(w_0)z^j, \qquad Q(z,w_0)=\sum_{j=0}^{m+k-1}q_j(w_0)z^j, $$
it is easy to see that
$$ q_j(w_0)=t_0^{-j-1}\,\sum_{i=j+1}^{m+k}p_i(w_0). $$
A simple sufficient condition for $Q\not=0$ on $\bt$ is
$$ |q_n|>\sum_{j\not=n}|q_j| $$
for some $n=0,1,\ldots,m+k-1$.
\end{remark}

\section{Weak LLRG}\label{s2}

The further partition of the divisors $Z(w)$, $w\not=w_0$, is important in this section
\begin{equation*}
Z(w)=\wih Z(w)\cup \wit Z(w). 
\end{equation*}
Here $\wih Z$ is the part of $Z$, which is {\sl close to the unit circle} $\bt$
\begin{equation}\label{cloroot}
\wih Z(w):=\{z_j(w)\in Z(w): 1-|z_j(w)|=o(1), \ w\to w_0\}, \ \ w\in B(w_0,\ep),
\end{equation}
and $\wit Z$ the part of $Z$, which {\sl stays away from} $\bt$
\begin{equation}\label{farroot}
\wit Z(w):=\{z_i(w)\in Z(w): \bigl|1-|z_j(w)|\bigr|\ge C_8>0\}, \ \ w\in B(w_0,\ep),
\end{equation}
(note that $z_n(w)\to z_n(w_0)$, $w\to w_0$). In other words, $\wih Z(w)$ arises from $Z_{un}(w_0)$, and $\wit Z(w)$ from $Z_{in}(w_0)\cup Z_{ext}(w_0)$.
So, it seems reasonable putting
$$ \wih Z(w_0):=Z_{un}(w_0), \qquad \wit Z(w_0):=Z_{in}(w_0)\cup Z_{ext}(w_0). $$

Denote also
\begin{equation*}
\wih Z_{in}(w):=Z_{in}(w)\cap \wih Z(w), \qquad \wit Z_{in}(w):=Z_{in}(w)\cap \wit Z(w), 
\end{equation*}
so $Z_{in}(w)=\wih Z_{in}(w)\cup \wit Z_{in}(w)$,
and, similarly, for $\wih Z_{ext}$, $\wit Z_{ext}$. Accordingly, the Wiener--Hopf factorization \eqref{wiho} is
\begin{equation}\label{wiho1}
\begin{split}
b(z)-w &=b_{ext}(z,w)\,b_{in}(z,w)=b_{ext}(z,w)\,\wih b_{in}(z,w)\,\wit b_{in}(z,w), \\
\wih b_{in}(z,w) &=\prod_{z_j\in \wih Z_{in}}\lp 1-\frac{z_j(w)}{z}\rp, \quad \wit b_{in}(z,w)=\prod_{z_j\in \wit Z_{in}}\lp 1-\frac{z_j(w)}{z}\rp.
\end{split}
\end{equation}

The divisors $Z(w)$ are called {\sl simple}, if all $z_j(w)$ are pairwise distinct (\emph{i.e., }no multiple roots). The complementary set
\begin{equation}\label{expset}  
K:=\{w=b(\l): \ b'(\l)=0\} 
\end{equation}
is obviously finite, cf. \cite[Lemma 11.4]{bogr}. 
Saying that $w_0\notin K$ ($Z(w_0)$ is simple) is equivalent to
\begin{equation}\label{nocus}
b'(z_j(w_0))\not=0, \qquad z_j\in Z(w_0).
\end{equation}
Certainly, the more natural assumption would be $b'(z_j(w_0))\not=0$, $z_j~\in Z_{un}(w_0)$, but, in some way, the roots inside 
$\bd$ play a role. By continuity, $Z(w)$ are simple for $w\in B(w_0,\ep)$ with small enough $\ep>0$, and 
\begin{equation}\label{separ}
0<C_9\le |z_i(w)-z_j(w)|\le C_{10}; \ i\not=j, \ z_i, z_j\in Z(w), \ w\in B(w_0,\ep).
\end{equation}

The roots $z_j\in Z(w)$ are analytic functions on $B(w_0,\ep)$, and, by the Inverse Function Theorem, 
\begin{equation}\label{invfun}
z_j(w)=z_j(w_0)+\frac{w-w_0}{b'(z_j(w_0))}+O\bigl((w-w_0)^2\bigr), \quad w\in B(w_0,\ep).
\end{equation}
So
\begin{equation}\label{controot}
0<C_{11}|w-w_0|\le |z_j(w)-z_j(w_0)|\le C_{12}|w-w_0|, \qquad w\in B(w_0,\ep).
\end{equation}
It follows from \eqref{invfun} that for $z_j\in \hat Z(w)$ and $w\in B(w_0,\ep)$
\begin{equation}\label{uncir}
|z_j(w)|^2=1+2\re\left\{\frac{\ovl{z_j(w_0)}}{b'(z_j(w_0))}\,(w-w_0)\right\}+O(|w-w_0|^2).
\end{equation}

For $w_0\in\pt\s(T_b)$, $z_j(w_0)\in Z_{un}(w_0)$, consider the domains (unless they are empty)
\begin{equation}\label{nontdom}
G_j(w_0):=\left\{w\in\bc: \left|\re\left\{\frac{\ovl{z_j(w_0)}}{b'(z_j(w_0))}\,(w-w_0)\right\}\right|>C_{13}|w-w_0|\right\}.
\end{equation}
By \eqref{controot} and \eqref{uncir}, the roots $z_j(w)\in \hat Z(w)$ lie in certain Stolz angles
(interior and exterior) for $w\in G_j(w_0)$, 
\begin{equation}\label{stolz}
\bigl|1-|z_j(w)|\bigr|>C_{14}|z_j(w)-z_j(w_0)|, \qquad z_j(w)\in \hat Z(w).
\end{equation}

Define the domain
\begin{equation}\label{optdom}
\oo'(w_0,\ep):=\bigcap_{z_j\in\hat Z(w)}\left (G_j(w_0)\bigcap\oo(w_0,\ep)\right )\subset\oo(w_0,\ep).
\end{equation}
The main result concerns the ``weak'' LLRG, that is, the local linear resolvent growth on the domain $\oo'(w_0,\ep)$.

\begin{theorem}\label{weakllrg}
Let $w_0\notin K$. Then for small enough $\ep>0$,
\begin{equation}\label{wllrg}
\bigl\|(T_b-w)^{-1}\bigr\|\le\frac{C(b,w_0,\ep)}{\di(w,\s(T_b))}\,, \qquad w\in\oo'(w_0,\ep).
\end{equation}
\end{theorem}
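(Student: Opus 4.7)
The plan is to upgrade the direct Krein bound used in Theorem~\ref{th1} by invoking the Toeplitz--Hankel commutation identity. Note first that in the present generality both $\wih Z_{in}(w)$ and $\wih Z_{ext}(w)$ may be nonempty; combining Krein with \eqref{stolz} then gives only $\|(T_b-w)^{-1}\|\lesssim|w-w_0|^{-2}$ on $\oo'$, quadratic in the distance and a full factor of $|w-w_0|$ short of what is claimed. To recover that factor I would apply the identity $T_fT_g=T_{fg}-H_{\bar f}^*H_g$ to the Wiener--Hopf inverse $(T_b-w)^{-1}=T_{b_{ext}^{-1}}T_{b_{in}^{-1}}$ (valid because $b_{ext}^{-1}\in H^\infty$ and $b_{in}^{-1}\in H^\infty_-$), producing the decomposition
\[
(T_b-w)^{-1}=T_{(b-w)^{-1}}-H_{\overline{b_{ext}^{-1}}}^*H_{b_{in}^{-1}}.
\]

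The Toeplitz summand is essentially free: $\|T_{(b-w)^{-1}}\|\le\|(b-w)^{-1}\|_\infty=1/\di(w,b(\bt))$, and since $\pt\s(T_b)\subseteq b(\bt)$ and $w\in\rho(T_b)$, the closest point of $b(\bt)$ to $w$ lies on $\pt\s(T_b)$, so $\di(w,b(\bt))=\di(w,\s(T_b))$. Thus this piece already has the form asserted in~\eqref{wllrg}, with no use of $\oo'$ required.

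The bulk of the work is to show $\|H_{\overline{b_{ext}^{-1}}}^*H_{b_{in}^{-1}}\|\le C/|w-w_0|$ on $\oo'$. Both symbols are rational with simple poles inside $\bd$, so by Kronecker's theorem the two Hankels have finite rank ($m$ and $k$ respectively). The partial-fraction expansions
\[
b_{in}^{-1}(z)=1+\sum_{z_j\in Z_{in}}\frac{A_jz_j}{z-z_j},\qquad b_{ext}^{-1}(z)=\sum_{z_i\in Z_{ext}}\frac{B_i}{z-z_i},
\]
with $A_j,B_i$ uniformly bounded thanks to~\eqref{separ}, reduce the Hankel product to a finite double sum of rank-one operators indexed by $(z_i,z_j)\in Z_{ext}\times Z_{in}$. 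A direct residue calculation then gives each rank-one norm $\le C\,|z_j|\,|z_i-z_j|^{-1}\cdot[(|z_i|^2-1)(1-|z_j|^2)]^{-1/2}$; the factor $|z_i-z_j|^{-1}$ stays bounded by~\eqref{separ}, while~\eqref{stolz} on $\oo'$ turns each of $|z_i|^2-1$ and $1-|z_j|^2$ into $\gtrsim|w-w_0|$ whenever the root is close to $\bt$. Thus the $\wih\times\wih$ summands contribute $O(|w-w_0|^{-1})$, the mixed $\wih\times\wit$ summands $O(|w-w_0|^{-1/2})$, and the $\wit\times\wit$ summands $O(1)$; summing the finitely many terms gives the required $C/|w-w_0|\le C/\di(w,\s(T_b))$ bound and closes~\eqref{wllrg}.

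The main obstacle I anticipate is the rank-one cancellation inside the Hankel composition: verifying via residues that the inner pairing of the two rank-ones scales as a bounded $z_j/(z_i-z_j)$ rather than as the diverging $[(|z_i|^2-1)(1-|z_j|^2)]^{-1/2}$ that a naive Cauchy--Schwarz estimate would suggest. This bounded pairing --- a direct consequence of the simplicity of $Z(w_0)$ encoded in~\eqref{separ}, and ultimately of the hypothesis $w_0\notin K$ --- is exactly what recovers the factor of $|w-w_0|$ by which pure Krein falls short of the linear bound~\eqref{wllrg}.
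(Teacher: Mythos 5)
Your argument is correct, but it reaches \eqref{wllrg} by a genuinely different route than the paper, so let me compare. The paper starts from the explicit resolvent formula $(T_b-w)^{-1}h=b_{ext}^{-1}\,\bp_+\bigl(h/b_{in}\bigr)$, expands $1/b_{in}$ in partial fractions, and uses $\bp_+\bigl(f/(t-\a)\bigr)=(f(t)-f(\a))/(t-\a)$ to split the resolvent into $I_1$, which is just multiplication by $(b-w)^{-1}$ and is bounded by $\di(w,\s(T_b))^{-1}$, and a correction $I_2$ carrying the values $h(z_j(w))$, $z_j\in Z_{in}(w)$; the two factors $|w-w_0|^{-1/2}$ are then produced by the point-evaluation bound \eqref{repr2} and by the $L^2$ estimate $\|(b(\cdot)-w)^{-1}\|_2\le C|w-w_0|^{-1/2}$, the latter via a second partial-fraction expansion over $\wih Z(w)$. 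Your identity $(T_b-w)^{-1}=T_{(b-w)^{-1}}-H_{\overline{b_{ext}^{-1}}}^*H_{b_{in}^{-1}}$ is the operator-theoretic counterpart of that splitting ($T_{(b-w)^{-1}}$ plays the role of $I_1$), but your treatment of the correction is different: instead of estimating $\|(b-w)^{-1}\|_2$ you decompose the finite-rank Hankel product into rank-one pieces, and the cancellation you flag as the main obstacle is in fact a one-line residue computation, $\bp_+\bigl((t-z_i)^{-1}(t-z_j)^{-1}\bigr)=(z_i-z_j)^{-1}(t-z_i)^{-1}$ for $z_j\in Z_{in}(w)$, $z_i\in Z_{ext}(w)$, which gives each piece norm $C\,|z_i-z_j|^{-1}\bigl[(|z_i|^2-1)(1-|z_j|^2)\bigr]^{-1/2}$, exactly as you claim. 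From there the ingredients coincide with the paper's: the separation \eqref{separ} (coming from $w_0\notin K$) bounds $|z_i-z_j|^{-1}$ and the residue coefficients, \eqref{stolz} together with \eqref{controot} on $\oo'$ turns each near-circle factor into $C|w-w_0|^{-1/2}$, and the final step uses $\di(w,\s(T_b))\le|w-w_0|$ since $w_0\in\s(T_b)$. What your version buys is a completely explicit, termwise rank-one estimate in place of the $L^2$ computation, with the two half-powers distributed symmetrically between roots approaching $\bt$ from inside and from outside. Two cosmetic remarks: the motivating claim that the plain Krein bound loses only a quadratic factor is accurate only when exactly two roots approach $\bt$ (in general one loses $|\wih Z|$ powers of $|w-w_0|$), and the equality $\di(w,b(\bt))=\di(w,\s(T_b))$ is more than you need, since $b(\bt)\subset\s(T_b)$ already gives the inequality in the useful direction; neither point affects the validity of your proof.
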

\begin{proof}
We begin with the explicit formula for the resolvent $(T_b-w)^{-1}$, see, \emph{e.g., } \cite[Theorem 3.3.6]{nik}, 
in terms of the Wiener--Hopf factorization \eqref{wiho}
\begin{equation}\label{expres}
(T_b-w)^{-1}h=\frac1{b_{ext}(t,w)}\,\bp_+\lp\frac{h(\cdot)}{b_{in}(\cdot,w)}\rp, \qquad h\in H^2,
\end{equation}
$\bp_+$ is the orthogonal projection of $L^2$ onto $H^2$. 

Since $K$ \eqref{expset} is finite, and $w_0\notin K$, we have
$$ B(w_0,\ep)\cap K=\emptyset $$
for small enough $\ep>0$, and \eqref{separ} holds.

The fraction in \eqref{expres} is
\begin{equation*}
\begin{split}
\frac{h(t)}{b_{in}(t,w)} &=\frac{t^m h(t)}{\prod_{z_j\in Z_{in}}(t-z_j(w))}=\sum_{z_j\in Z_{in}}a_j(w)\frac{t^m h(t)}{t-z_j(w)}\,, \\
a_j(w) &=\prod_{p\not=j}(z_j(w)-z_p(w))^{-1}, \quad z_j(w), z_p(w)\in Z_{in}.
\end{split}
\end{equation*}
Note that, due to \eqref{separ}, $|a_j(w)|\le C_{15}$ 
for all $w\in B(w_0,\ep)$.

The formula below is well known and can be checked directly,
$$ \bp_+\lp\frac{f(\cdot)}{t-\a}\rp=\frac{f(t)-f(\a)}{t-\a}\,, \qquad f\in H^2, \quad \a\in\bd. $$
Hence,
\begin{equation*}
\bp_+\lp\frac{h(\cdot)}{b_{in}(\cdot,w)}\rp=\sum_{z_j\in Z_{in}}a_j(w)\frac{t^mh(t)-z_j^m(w)h(z_j(w))}{t-z_j(w)}\,,
\end{equation*}
and \eqref{expres} looks as
\begin{equation*}
\begin{split}
(T_b-w)^{-1}h &=\frac1{b(t)-w}\sum_{z_j\in Z_{in}}a_j(w)\frac{b_{in}(t,w)}{t-z_j(w)}\bigl(t^mh(t)-z_j^m(w)h(z_j(w)\bigr)=I_1-I_2, \\
I_1 &=\frac1{b(t)-w}\sum_{z_j\in Z_{in}}a_j(w)\frac{b_{in}(t,w)}{t-z_j(w)}\,t^mh(t), \\
I_2 &=\frac1{b(t)-w}\sum_{z_j\in Z_{in}}a_j(w)\frac{b_{in}(t,w)}{t-z_j(w)}\,z^m_j(w)h(z_j(w)).
\end{split}
\end{equation*}
So,
\begin{equation}\label{normres} 
\bigl\|(T_b-w)^{-1}h\bigr\|_2\le \|I_1\|_2+\|I_2\|_2. 
\end{equation}
The bound for the first term on the RHS is simple. Indeed, as $|z_j(w)|\le1$, $z_j(w)\in Z_{in}$, we see that
$$ \left|\frac{b_{in}(t,w)}{t-z_j(w)}\right|\le 2^{m-1}, $$
and
\begin{equation}\label{ione}
\|I_1\|\le\frac{C_{16}\|h\|_2}{\min_{t\in\bt}|b(t)-w|}\le \frac{C_{16}}{\di(w,\s(T_b))}\|h\|_2\,.
\end{equation}

The calculation for the second term is more complex. We have
$$ \|I_2\|\le C_{17}\left\|\frac1{b(t)-w}\right\|_2\,\sum_{z_j\in Z_{in}}|h(z_j(w))|, $$
and, as is well known,
\begin{equation}\label{repr}
|h(\l)|\le\frac{\|h\|_2}{\sqrt{1-|\l|^2}}\,, \qquad h\in H^2, \quad \l\in\bd. 
\end{equation}
Recall that $Z_{in}=\tilde Z_{in}\cup \hat Z_{in}$. For the roots $z_j\in\tilde Z_{in}$, which are far from $\bt$, we simply have by \eqref{repr},
\begin{equation}\label{repr1} 
|h(z_j(w))|\le C_{18}\|h\|_2, \qquad z_j\in\tilde Z_{in}(w). 
\end{equation}

Let now $z_j\in\hat Z_{in}$. Here the Stolz angles \eqref{stolz} show up. In view of \eqref{stolz} and \eqref{controot},
\begin{equation}\label{repr2}
|h(z_j(w))|\le \frac{C_{19}\|h\|_2}{\sqrt{|w-w_0|}}\,, \qquad z_j\in\hat Z_{in}(w). 
\end{equation}

It remains only to obtain an appropriate bound for $\|(b(\cdot)-w)^{-1}\|_2$. Recall, that $Z(w)=\tilde Z(w)\cup\hat Z(w)$, and so
\begin{equation*}
\begin{split}
&{}\left\|\frac1{b(t)-w}\right\|_2^2 =\frac1{|b_k|}\int_{\bt}\prod_{j=1}^{m+k}\frac1{|t-z_j(w)|^2}\,m(dt)
\le C_{20}\int_{\bt}\prod_{z_i\in \hat Z}\frac1{|t-z_i(w)|^2}\,m(dt) \\
&=C_{20}\left\|\sum_{z_i\in\hat Z}\frac{d_i(w)}{t-z_i(w)}\right\|_2^2, \ \ d_i(w)=\prod_{p\not=i}(z_i(w)-z_p(w))^{-1}, \ \ z_j, z_p\in\hat Z. 
\end{split}
\end{equation*}
Hence, due to \eqref{separ}, \eqref{stolz} and \eqref{controot},
$$ \left\|\frac1{b(t)-w}\right\|_2\le C_{21}\sum_{z_i\in\hat Z}\frac1{\sqrt{|1-|z_j(w)|^2|}}\le\frac{C_{22}}{\sqrt{|w-w_0|}}. $$
Finally, by \eqref{repr2},
\begin{equation}\label{itwo}
\|I_2\|\le\frac{C_{23}}{|w-w_0|}\|h\|_2\le\frac{C_{23}}{\di(w,\s(T_b))}\|h\|_2.
\end{equation}
The combination of \eqref{normres}, \eqref{ione} and \eqref{itwo} completes the proof.
\end{proof}

\begin{remark}
We can weaken a bit the assumption $w_0\notin K$ to the following one: the part $Z_{un}(w_0)\cup Z_{in}(w_0)$ is simple (we allow multiple roots
outside the unit disk).
\end{remark}

\section{Example}\label{s2}

We consider a simple example 
of the symbol with self-intersection
\begin{equation}\label{exam1}
b(z):=z^{-1}+z^2, \qquad w_0=0.
\end{equation}
The image $b(\bt)=\s_{ess}(T_b)=\pt(\s(T_b))$ is a flower with three petals, and the union of these petals constitutes the spectrum $\s(T_b)$, see Figure \ref{f1}.

The resolvent set $\rho(T_b)$ contains three angles with the vertex at the origin
\begin{equation*}
\begin{split} 
\gg_1&:=\Bigl\{\frac{\pi}6<\p<\frac{\pi}2\Bigr\}, \qquad \gg_2:=\Bigl\{\frac{5\pi}6<\p<\frac{7\pi}6\Bigr\}, \\ 
\gg_3&:=\Bigl\{\frac{3\pi}2<\p<\frac{11\pi}6\Bigr\}, \qquad \gg:=\bigcup_{j=1}^3\gg_j, \quad w=|w|e^{i\p}.
\end{split}
\end{equation*}
For an arbitrary (small enough) $\d>0$, consider the interior angles
\begin{equation*} 
\begin{split} \gg_1^\d &:=\Bigl\{\frac{\pi}6+\d<\p<\frac{\pi}2-\d\Bigr\}, \qquad \gg_2^\d:=\Bigl\{\frac{5\pi}6+\d<\p<\frac{7\pi}6-\d\Bigr\}, \\ 
\gg_3^\d&:=\Bigl\{\frac{3\pi}2+\d<\p<\frac{11\pi}6-\d\Bigr\}, \qquad \gg^\d:=\bigcup_{j=1}^3\gg_j^\d,
\end{split}
\end{equation*}

\begin{figure}[t]
\centering
 \includegraphics[width=7cm]{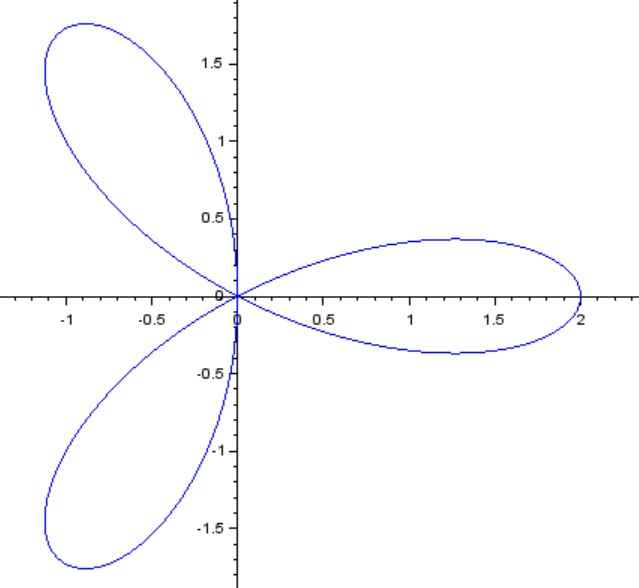}
 \caption{The curve $b(\bt)$ for $b(z)=z^{-1}+z^2, \, z\in \bt$. 
 }\label{f1}
  
\end{figure}

The unperturbed divisor is simple
$$ Z(0)=\{z_j(0)=e^{i t_j}\}_{j=1}^3: \qquad t_1=-\frac{\pi}3, \quad t_2=\pi, \quad t_3=\frac{\pi}3. $$
It is easy to see that $b'(z_j(0))=3z_j(0)$, $j=1,2,3$, and so
$$ \re\left\{\frac{\ovl{z_j(0)}}{b'(z_j(0))}\,w\right\}=\frac{|w|}3\cos(\p-2t_j). $$
An elementary calculation shows that for each fixed and small enough $\d>0$
$$ |\cos(\p-2t_j)|\ge C(\d)>0, \qquad w=|w|e^{i\p}\in\gg^\d, \quad j=1,2,3, $$
(this is, in general, not the case for $\d=0$). Hence, by \eqref{nontdom} and \eqref{stolz},
$$ \gg^\d\cap B(0,\ep)\subset\oo'(0,\ep), \qquad \forall\d>0. $$
Theorem \ref{wllrg} then claims that for small enough $\ep>0$,
\begin{equation}\label{exam2}
\bigl\|(T_b-w)^{-1}\bigr\|\le\frac{C(b,\d,\ep)}{\di(w,\s(T_b))}, \qquad w\in\gg^\d\cap B(0,\ep), \quad \forall\d>0.
\end{equation}


\begin{thebibliography}{99}

\bibitem{bogr}
A. B\"ottcher and S. Grudsky, {\it Spectral Properties of Banded Toeplitz Matrices}, SIAM, Philadelphia 2005.

\bibitem{bosil}
A. B\"ottcher and B. Silbermann, {\it Introduction to Large Truncated Toeplitz Matrices}, Springer Science NY, 1999.

\bibitem{gokuvi}
L. Golinskii, S. Kupin and A. Vishnyakova, On the growth of resolvent of Toeplitz operators, MPAG, 2024.

\bibitem{ha}
P. Halmos, {\it A Hilbert Space Problem Book}, Springer--Verlag, Berlin, 1982.

\bibitem{nik}
N. Nikolski, {\it Toeplitz Matrices and Operators}, Cambridge Studies in Advanced Mathematics, v.182, 2020.
\end{thebibliography}
\end{document}